\newtheorem{theorem}{Theorem}[section] 
\newtheorem*{theorem_1}{Theorem A} 
\newtheorem*{theorem_2}{Theorem B} 
\newtheorem{lemma}[theorem]{Lemma} 
\newtheorem{corollary}[theorem]{Corollary} 
\newtheorem{proposition}[theorem]{Proposition} 
\newtheorem{definition}[theorem]{Definition} 
\newtheorem{remark}[theorem]{Remark} 
\DeclareMathOperator{\rk}{rk} 
\DeclareMathOperator{\gk}{\mathcal{GK}} 
\DeclareMathOperator{\ann}{ann} 
\DeclareMathOperator{\s}{\ast} 
\title{A note on modules over Quantum Laurent Polynomials} 
\author{Ashish Gupta} 
\begin{document} 
\maketitle 
\begin{abstract}   
A twisted group  
algebra $F \s A$ of a finitely generated free abelian group $A$ over a  
field $F$ is in  
general noncommutative, but $A$ may contain nontrivial subgroups $C$ so that  
the subalgebra $F \s C$ is commutative. In this paper, we show that  
an $F \s A$-module $M$ which is finitely generated as an $F \s C$-module for any commutative  
subalgebra $F \s C$ must be artinian. We also show that $M$ must be torsion-free as $F \s C$-module and the  
Gelfand--Kirillov dimension of $M$ must equal the rank of $C$.  
We then apply these results to modules  
over finitely  
generated nilpotent groups  
of class $2$.  
\end{abstract} 
 
\section{Introduction} 
A quantum Laurent polynomial algebra $P(\mathfrak q, F)$ 
over a field $F$ 
is defined as the associative  
$F$-algebra generated over  
$F$ by the variables
$u_1, \cdots, u_n$ and their  
inverses, satisfying the relations,  
\[ u_iu_j = q_{ij}u_ju_i, \]  
where $q_{ij} \in F$ are nonzero  
scalars and $\mathfrak q$ is the $n \times n$ matrix $(q_{ij})$. The $q_{ij}$ satisfy  
\[ q_{ii} = 1 = q_{ij}q_{ji}, \ \ \ \ \ \ \ i,j = 1, \cdots, n. \]   
These algebras have been  
called by various names.  They have been called the \emph{multiplicative analouges of the Weyl algebra}, the \emph{McConnell--Pettit algebras}, the \emph{quantum Laurent polynomial algebras} and the \emph{quantum tori}. They arise as localizations of  
group  
algebras (see \cite{Br}) and also  
play an important role in noncommutative geometry (see \cite{M}).  
 
The quantum Laurent polynomial algebras are  
precisely the twisted group algebras (see \cite[Chapter 1]{P1})  
$F \s A$ of a finitely generated free abelian group $A$ over a  
field $F$. In this paper we shall mainly use the notation  
$F \s A$ for a quantum Laurent polynomial algebra. Sometimes 
the following more explicit notation shall be needed:
\[ F_{\mathfrak q}[u_1^{\pm 1}, \cdots, u_n^{\pm 1}], \]
where $\mathfrak q = (q_{ij})$.

Subsequently the situation with an arbitrary number of generators $u_1^{\pm 1}, \cdots, u_n^{\pm 1}$ was studied in 
\cite{MP}. In \cite{MP}, it was noted that the Krull and the global dimensions of $F \s A$ must coincide. 
An interesting and important criterion was given for these dimensions in \cite[Corollary 3.8]{MP} in terms of certain 
localizations (see \cite[]{MP}). \\  In \cite{AG}.
Modules over quantum Laurent polynomials have been considered in \cite{A1}, \cite{A2}, \cite{BG1}, \cite{BG2} and \cite{MP}.         
In this note we employ the methods of \cite{AG} and \cite{MP} and the work in \cite{BG2} in order to prove the following theorem for modules over $F \s A$:

\begin{theorem_1} 
\label{MT1} 
Let $M$ be a nonzero finitely  
generated  $F \s A$-module, where  
$F \s A$ has center $F$.  
Let $C < A$ be a subgroup which contains a subgroup $C_0$ of  
finite index such that $F \s C_0$ is commutative.  
If $M$ is finitely generated as an $F \s C$-module then,    
\begin{enumerate} 
\item[(i)] $\gk(M) = \rk(C)$,  
\item[(ii)] $M$ is $F \s C$-torsion-free,  
\item[(iii)] $M$ is artinian,  
\item[(iv)] $M$ is cyclic.      
\end{enumerate}  
\end{theorem_1}  

In this theorem $\gk(M)$ denotes the Gelfand--Kirillov dimension of $M$.
It follows from this  
result that each finitely  
generated $F \s A$-module  
with GK dimension one must be  
artinian (see Corollary 2.4). 
   
Artinian modules  
arise in other  
situations also. For example it was 
shown in \cite{Ar} that if the  
multiparameters $q_{ij}$, where  
$1 \le i < j \le n$, are 
independent in $F^*$, then each  
finitely generated periodic  
$F \s A$-module is  
artinian and cyclic. \\ 
 
In Proposition \ref{app_nilp_gr} 
we give an  
application of this theorem  
to finitely generated modules over finitely generated and  
torsion-free nilpotent groups $H$ of class 2. 
If $k$ is field we denote by $kH$ the  
group algebra of $H$ over $k$. 
A $kH$-module $M$ is   
said to be \emph{reduced} if the annihilator of $M$ in $k\zeta H$  
is a prime ideal $P$  
and $M$ is  
$k\zeta H/P$-torsion-free.  
Every finitely generated kH-module is  
filtered by a finite series  
each section of which is reduced  
with some prime ideal of $k\zeta H$. 
The following theorem which is a special case of is deduced  
\begin{theorem_2} 
Let $H$ be a finitely generated  
torsion-free nilpotent group of class $2$. 
A finitely generated  
$kH$-module $Q$ which is  
reduced with annihilator $P$, and  
is finitely generated over $kL$ for some  
abelian subgroup $L \le H$ is torsion-free as $kL/P.kL$-module. Moreover if  
$\zeta H$ is cyclic then $Q$ is also  
artinian.          
\end{theorem_2} 
 
\section{Background on $F \s A$} 
 
The twisted group algebra  
$F \s A$ of a finitely generated 
free abelian group $A$ over a field  
$F$ has as an $F$-basis a copy  
$\overline A : = \{ \bar a, a \in A \}$ of $A$. 
The multiplication of  
basis elements $\bar a_1, \bar a_2$ is defined  by  
\[ \bar a_1 \bar a_2 =  
\lambda(a_1,a_2)\overline{a_1a_2}, \]  
where $\lambda: A \times A \rightarrow F^*$ is a function. 
The associativity  
condition requires that the map  
$\lambda$ be a $2$-cocycle (see \cite[Chapter 1]{P1}). 
The scalars $\kappa \in F$  
commute with the basis elements. Thus $\kappa \bar a = \bar a \kappa$ for $\kappa \in F$ and $a \in A$.  Thus any $\alpha \in F \s A$ can be uniquely  
expressed in the form  
$\alpha =  
\sum_{a \in A}\lambda_a \bar a$,  
where $\lambda_a = 0$ for all but finitely many  
$a \in A$. The subset of elements $a \in A$ such that  
$\lambda_a \ne 0$ is known as the \emph{support  
of $\alpha$ in $A$}.  
For a subgroup $B$ of $A$, the subset of elements 
$\beta \in F \s A$ such that the  
support of $\beta$ lies in  
$B$, has the structure of a  
twisted group algebra of $B$ over  
$F$. This is denoted as $F \s B$. 
As already mentioned, $F \s A$ has as an $F$-basis a copy 
$\{ \bar a, a \in A \}$ of $A$. By a 
\emph{diagonal change in basis} we may also take the subset $\{ \kappa_a \bar a, a \in A, \kappa_a \in F^*\}$ as an $F$-basis for $F \s A$. As explained in \cite[Chapter 1]{P1}, with the help of a diagonal change in basis it can be shown that $F \s Z$ is commutative whenever $Z$ is an infinite cyclic subgroup of $A$.
By \cite[Theorem A]{Br}, the maximal rank of a 
subgroup $B$ of $A$ such that $F \s B$ is commutative is equal to the Krull and the global dimension of $F \s A$.  
Examples of $F \s A$ for which the commutativity of $F \s B$ implies that $B$ is an infinite cyclic subgroup of $A$ can be found in \cite{MP}.

It is known (e.g. \cite[Lemma 37.8]{P2}) 
that for each subgroup $B \le A$, the monoid of nonzero elements of $F \s B$ is an Ore subset of $F \s A$.
Thus we may localize $F \s A$ at $X: = F \s B \setminus \{0\}$. 
This localization $(F \s A)X^{-1}$ is itself a crossed product (see \cite[Chapter 1]{P2}) $D \s A/B$ of the group $A/B$ over the division ring $D$ of 
quotients of $F \s B$. In this connection we shall adopt a useful notation that was introduced in \cite{MP}: 
let $\{x_1, \cdots, x_n \}$ be a basis for $A$ such that $\{x_1, \cdots, x_k\}$ is basis for $B$, 
where $k \le n - 1$. 
Then $(F \s A)X^{-1}$ is denoted as:
\[ F(x_1, \cdots, x_k)[x_{k +1}, \cdots, x_n], \] 
where the localised generators are enclosed in roundparenthesess.\\
As shown in \cite{MP}, 
the unit group $\mathcal U$ of $F \s A$ is the set 
\[ \mathcal U = \{ \kappa  \bar a , \kappa \in F^*, a \in A \}. \]
The derived subgroup $\mathcal U'$ of $\mathcal U$ is a subgroup of $F^*$. Thus 
$\mathcal U$ is nilpotent of class at most $2$. For subsets $X$ and $Y$ of $A$, we denote by $\overline X$ the subset 
$\{ \bar x, x \in X\}$ of $\mathcal U$, and by $[ \overline X, \overline Y ]$ the subgroup of $\mathcal U'$ generated by the commutators $[\bar x, \bar y]$, where $x \in X$ and $y \in Y$.  
Since $\mathcal U'$ is central in $\mathcal U$, hence by \cite[Chapter 5]{Ro},
\begin{align*} 
[\bar x_1\bar x_2, \bar y] &= [\bar x_1, \bar y][\bar x_2, \bar y] \\ 
[\bar x, \bar y_1 \bar y_2] &= [\bar x, \bar y_1][\bar x, \bar y_2] 
\end{align*}
A dimension for studying 
modules over a crossed product 
$D \s A$ of a free finitely  
generated abelian group $A$ over a division ring $D$, denoted as $\dim(M)$, 
was introduced in \cite{BG2} 
and was shown to coincide with the Gelfand--Kirillov dimension (\cite{KL}).   
  
\begin{definition}[Definition 2.1 of \cite{BG2}] 
\label{def_dim} 
Let $M$ be a $D \s A$-module. The dimension $\dim (M)$ of 
$M$ 
is defined to be 
the greatest integer $r$, $0 \le r \le \rk(A)$, so that 
for some subgroup $B$ in 
$A$ with rank $r$,  
$M$ is not $D \s B$-torsion. 
\end{definition}
 
\begin{proposition}[Lemma 2.2(1) of \cite{BG2}] 
\label{dim_is_exact} 
Let \[ 
0 \rightarrow M_1 \rightarrow M \rightarrow M_2 \rightarrow 0, \] 
be an exact sequence of $D \s A$-modules. Then, 
\[ \dim(M) = \sup(\dim(M_1), \dim(M_2)). \] 
\end{proposition}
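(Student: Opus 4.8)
The plan is to unwind Definition~\ref{def_dim} into a statement about annihilators of individual elements and then chase the short exact sequence. For a subgroup $B \le A$ of rank $k$, the crossed product $D \s B$ is obtained from the division ring $D$ by iterating skew Laurent polynomial extensions $k$ times (the relevant $2$-cocycle being trivializable at each step since $H^2(\mathbb{Z},-)$ vanishes), so $D \s B$ is a domain; and, purely by negating the definition of ``torsion'', a $D \s B$-module $N$ \emph{fails} to be $D \s B$-torsion exactly when there is an $n \in N$ with $\ann_{D \s B}(n) = 0$. I will use this reformulation of Definition~\ref{def_dim} throughout, together with the fact that $D\s B$ is a domain.

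\textbf{Lower bound.} First I would establish $\dim(M) \ge \dim(M_1)$ and $\dim(M) \ge \dim(M_2)$. For $M_1$, choose a subgroup $B \le A$ realising $\dim(M_1)$ and an element $m_1 \in M_1$ with $\ann_{D \s B}(m_1) = 0$; since $M_1$ is a submodule of $M$, the same element has trivial annihilator inside $M$, so $M$ is not $D \s B$-torsion and $\dim(M) \ge \rk(B) = \dim(M_1)$. For $M_2$, choose $B$ realising $\dim(M_2)$ and $\bar m \in M_2$ with $\ann_{D \s B}(\bar m) = 0$, and lift $\bar m$ to any $m \in M$; since $xm = 0$ forces $x\bar m = 0$, we get $\ann_{D \s B}(m) \subseteq \ann_{D \s B}(\bar m) = 0$, whence $M$ is not $D \s B$-torsion and $\dim(M) \ge \dim(M_2)$. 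Together these give $\dim(M) \ge \sup(\dim(M_1), \dim(M_2))$.

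\textbf{Upper bound.} For the reverse inequality, put $r = \dim(M)$ and fix a subgroup $B \le A$ of rank $r$ and an element $m \in M$ with $\ann_{D \s B}(m) = 0$; let $\bar m$ be the image of $m$ in $M_2$. If $\ann_{D \s B}(\bar m) = 0$, then $M_2$ is not $D \s B$-torsion, so $\dim(M_2) \ge r$ and we are done. Otherwise there is a nonzero $x \in D \s B$ with $x\bar m = 0$, so that $xm$ lies in $M_1$. I claim $\ann_{D \s B}(xm) = 0$: if $y \in D \s B$ satisfies $y(xm) = 0$, then $(yx)m = 0$, hence $yx \in \ann_{D \s B}(m) = 0$, and since $D \s B$ is a domain and $x \ne 0$ this forces $y = 0$. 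In particular $xm \ne 0$ (otherwise $1 \in \ann_{D \s B}(xm)$). Thus $M_1$ is not $D \s B$-torsion and $\dim(M_1) \ge r$. In either case $\sup(\dim(M_1), \dim(M_2)) \ge r = \dim(M)$.

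Combining the two bounds yields the asserted equality, with the convention that $\dim$ of the zero module is $-\infty$, which covers the degenerate cases in which one of the three modules vanishes. The one step requiring genuine attention is the implication ``$yx = 0$ and $x \ne 0$ imply $y = 0$'' in the upper bound: this is precisely where the hypotheses that $B$ is free abelian and $D$ is a division ring are used, via the fact that $D \s B$ is a domain. Everything else is a routine diagram chase, so I do not expect a real obstacle.
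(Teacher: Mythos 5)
Your proof is correct: the element-wise reformulation of ``not $D \s B$-torsion'' as the existence of an element with zero annihilator, the submodule/lifting arguments for the lower bound, and the domain argument ($yx=0$, $x\ne 0$ $\Rightarrow$ $y=0$ in $D \s B$) for the upper bound all go through, and the fact that $D \s B$ is a domain is correctly justified via iterated skew Laurent extensions of the division ring $D$. The paper itself gives no proof of this statement --- it is quoted verbatim from Lemma 2.2(1) of \cite{BG2} --- so there is nothing internal to compare against; your argument is the standard one and is essentially the proof given in that reference.
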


\section{The proofs of Theorems A and B}

The following lemma shall be  
key tool in the proof of  
Theorem A.    
\begin{lemma} 
\label{AG_reslt} 
Suppose that $F \s A$ has a 
finitely generated module 
$M$ and $A$ has a subgroup $C$  
with $A/C$ torsion free, 
$\rk(C) = \gk(M)$, 
and $F \s C$ commutative. Suppose moreover that $M$ 
is not $F \s C$-torsion. 
Then $C$ has a virtual complement $E$ in $A$ 
such that $F \s E$ is commutative.  
In fact given $\mathbb Z$-bases  
$ \{x_1,\cdots, x_r \}$ and $\{ x_1,\cdots, x_r,  
x_{r + 1}, \cdots, x_n \}$ for  
$C$ and $A$ respectively, there exist monomials $\mu_j, j =  
r + 1, \cdots n,$  
in $F \s C$,  
and an integer $s > 0$ such that the monomials 
$\mu_j \bar{x_j}^s$ commute in $F \s A$. 
\end{lemma}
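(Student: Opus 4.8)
The plan is to extract the commuting complement from the module-theoretic hypothesis by studying the localization of $M$ at $X := F \s C \setminus \{0\}$, exactly in the spirit of \cite{MP} and \cite{BG2}. Since $M$ is not $F \s C$-torsion, the localized module $\widetilde M := MX^{-1}$ is a nonzero module over the crossed product $(F \s A)X^{-1} = D \s A/C$, where $D$ is the division ring of fractions of the commutative domain $F \s C$; in the notation recalled above this is $F(x_1,\dots,x_r)[x_{r+1},\dots,x_n]$. Because $\gk(M) = \rk(C) = r$, Proposition \ref{dim_is_exact} together with the identification of $\dim$ with GK dimension forces $\widetilde M$ to be $D \s B$-torsion for every subgroup $B/C$ of positive rank in $A/C$; equivalently, $\widetilde M$ has GK dimension $0$ over $D \s A/C$, so it is finite-dimensional over $D$. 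The hard part, which I expect to be the main obstacle, is converting this finite-dimensionality into the existence of honest commuting elements $\mu_j \bar{x_j}^{\,s}$ rather than merely a complement up to finite index in some looser sense.

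The mechanism for this conversion is the following. Fix $j$ with $r+1 \le j \le n$. Acting by $\bar{x_j}$ on the finite-dimensional $D$-space $\widetilde M$ gives a $D$-semilinear operator, where the twist is the automorphism of $D$ induced by conjugation by $\bar{x_j}$. Some power $\bar{x_j}^{\,s}$ (with $s$ chosen uniformly in $j$, e.g. $s = \dim_D \widetilde M \,!$ or a common value obtained by passing to a further finite-index subgroup of $A/C$) will then act by an operator whose minimal polynomial has coefficients in $F \s C$ after clearing denominators; the key point, as in \cite{AG}, is that one can choose a cyclic generator and read off that $\bar{x_j}^{\,s}$ differs from an element of $F \s C$ by something acting invertibly, and then absorb a monomial $\mu_j \in F \s C$ so that $\mu_j \bar{x_j}^{\,s}$ centralizes $\widetilde M$. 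A centralizing element of $\widetilde M = MX^{-1}$ that is a unit of $F \s A$ (it is $\kappa \overline{a}$ for suitable $\kappa, a$) must in fact be central in $(F \s A)X^{-1}$, since $M$ is a faithful module over $(F \s A)/\mathrm{ann}(M)$ and the localization is a simple-enough ring that the centralizer of the whole module is the center; hence $\mu_j \bar{x_j}^{\,s}$ commutes with all of $F \s A$ in the localization, and since it already lies in $F \s A$ (a domain, embedding in its localization) it commutes there.

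Once the commuting monomials $v_j := \mu_j \bar{x_j}^{\,s}$ are in hand for $j = r+1,\dots,n$, the subgroup $E$ of $A$ generated by the supports of $v_{r+1},\dots,v_n$ is the desired virtual complement: the $v_j$ pairwise commute and each commutes with $F \s C$ by construction, so $F \s E$ is commutative, and $E$ maps onto a finite-index subgroup of $A/C$ because each $v_j$ has support $x_j^{\,s}$ modulo $C$, whence $E \cap C = \{1\}$ up to finite index and $\rk(E) = n - r$. I would organize the write-up as: (1) localize and record that $\widetilde M$ is finite-dimensional over $D$; (2) the semilinear-operator argument producing one commuting monomial for a single $j$; (3) the uniform choice of $s$ and the centrality upgrade via faithfulness; (4) assembling $E$ and checking $F \s E$ is commutative and $E$ is a virtual complement. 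Step (2)–(3) is where the real work lies, and it is exactly the point where the hypotheses $\gk(M) = \rk(C)$ and "center $F$" (forced implicitly through the setup of Theorem A, though here we only need $F \s C$ commutative) are used.
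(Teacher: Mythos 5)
Your opening moves match the paper exactly: localize at $X=F\s C\setminus\{0\}$, observe that $\widetilde M=MX^{-1}$ is a nonzero module over $(F\s A)X^{-1}=F(x_1,\dots,x_r)[x_{r+1},\dots,x_n]$, and use $\gk(M)=\rk(C)$ to conclude that $\widetilde M$ is finite-dimensional over $D=\mathrm{Frac}(F\s C)$ (the paper cites \cite[Lemma 2.3]{BG2} for this). The gap is in your steps (2)--(3). The target you set yourself --- a monomial $\mu_j$ such that $v_j=\mu_j\bar x_j^{\,s}$ \emph{centralizes} $\widetilde M$ and is therefore central in the localization --- is strictly stronger than the lemma and is false in general. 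Indeed, if $v_j$ commuted with the action of every $\bar c$, $c\in C$, then since $[v_j,\bar c]\in F^*$ and the unit $\bar c\,v_j$ acts injectively on the nonzero module $\widetilde M$, one would get $[v_j,\bar c]=1$, i.e.\ $v_j$ commutes with all of $F\s C$. Take $F_{q}[x_1^{\pm1},x_2^{\pm1}]$ with $q$ of infinite order, $C=\langle x_1\rangle$ and $M=F\s A/F\s A(\bar x_2-1)\cong F\s C$: all hypotheses of the lemma hold, yet no $\kappa\bar x_1^{k}\bar x_2^{\,s}$ with $s>0$ commutes with $\bar x_1$, since $[\bar x_2^{\,s},\bar x_1]=q^{\pm s}\ne1$. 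Here $\bar x_2$ acts on $\widetilde M\cong D$ as the nontrivial automorphism $\sigma_2$, and no monomial multiple of any power of it is $D$-linear; correspondingly, your ``minimal polynomial of $\bar x_j^{\,s}$ with coefficients in $F\s C$'' does not exist in the sense required, because $\bar x_j^{\,s}$ is only $\sigma_j^{s}$-semilinear and $\sigma_j$ typically has infinite order on $D$. The lemma asserts only that the $v_j$ commute \emph{with one another}, not with $F\s C$, and your centrality upgrade (which in any case invokes a faithfulness/simplicity property that is not among the hypotheses of the lemma) is aimed at the wrong statement.

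What the paper actually does at this point, and what your sketch is missing, is a reduction to the one-dimensional case: with $s=\dim_{F_S}(MS^{-1})$ it passes to the top exterior power $\wedge^{s}(MS^{-1})$ over $D$, which is a one-dimensional module over the crossed product $R'$ having the same $D$-part but with $q_{ij}$ replaced by $q_{ij}^{s}$ for $r<i,j\le n$. For a one-dimensional module the argument of \cite[Section 3]{AG} applies: each $\bar x_j$ acts as $d_j\sigma_j$ with $d_j\in D^{*}$, the defining relations become identities among the $d_j$, $\sigma_i(d_j)$ and $q_{ij}^{s}$, and a leading-term analysis lets one replace the $d_j$ by monomials $\mu_j\in F\s C$ with $[\mu_i\bar x_i,\mu_j\bar x_j]=1$ in $R'$; since commutators are multiplicative here, this translates into $[\mu_i\bar x_i^{\,s},\mu_j\bar x_j^{\,s}]=1$ in $F\s A$, and $E$ is generated by the supports $c_jx_j^{\,s}$. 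The exterior power is precisely the determinant-like device your semilinear-operator heuristic is groping for, and without it (or an equivalent) steps (2)--(4) of your plan cannot be carried out.
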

 
\begin{proof} 
Let  
$\bar {x_i}\bar{x_j} = q_{ij}\bar{x_j}\bar{x_i}$, where, $i,j = 1, \cdots, n$ and $q_{ij} \in F^*$.    
We set $S = F \s C\setminus\{0\}$ and denote the 
quotient field $(F \s C)S^{-1}$ by $F_S$. 
Then $(F \s A)S^{-1}$ is a crossed product 
\[ R  = F(x_1, \cdots, x_r)[ x_{r + 1}, \cdots, x_{n}]. \] 
The corresponding module of fractions 
$MS^{-1}$ is nonzero as $M$ 
is not $S$-torsion by the hypothesis. 
Furthermore, by the hypothesis,  
$\gk(M)  = \rk(C)$ and so in view of 
\cite[Lemma 2.3]{BG2},  
$MS^{-1}$ is finite dimensional as $F_S$-space. 
It is shown in \cite[Section 3]{AG} that if 
$R$ has a module that is one dimensional 
over $F_S$ 
then there exist monomials 
$\mu_i \in F \s C$ 
such that the monomials $\mu_i \bar x_i$, where $r + 1 \le i \le n$, 
commute mutually. Since the scalars in $F$ are central in $F \s A$, we may assume that 
$\mu_i \in \overline {C}$. 
If $c_i$ is the element of $C$ corresponding to $\mu_i$, we may in this case take 
\[ E = \langle c_{r + 1}x_{r + 1}, \cdots, c_nx_n \rangle. \]
the $s$-fold exterior power $M': = \wedge^s(MS^{-1}_{F_S})$, where, 
$s = \dim_{F_S} (MS^{-1})$ is a 
one dimensional module over the crossed product $R'$ obtained from $R$ by 
raising the cocycle of $R$ to its $s$-th power. More explicitly $R'$ is the crossed product 
\[ R'  = R  = F(x_1, \cdots, x_r)[ x_{r + 1}, \cdots, x_{n}] \] 
but with $[\bar x_i, \bar x_j] = q'_{ij}$, where
\begin{equation}  
\label{s_th_power_of_cocyle} 
q'_{ij} = q_{ij} , \ \ \  1 \le i \le  r;   \ \ \ \ \ \ \  
q'_{ij} = q_{ij}^s, \ \ \  r < i, j \le n.  
\end{equation} 
Then there are monomials $\mu_j \in F \s C$ such that 
the monomials 
$\mu_j \bar x_j$, where $j = r +1, \cdots, n$ commute with respect to the cocycle defined in equation \ref{s_th_power_of_cocyle}.
It easily follows from this that the monomials 
$\{\mu_j \bar x_j^s\}_{j = r+1}^n$ commute in $F \s A$. 
\end{proof} 
 
\begin{remark}
\label{fi}
The above lemma remains vaild if we drop 
the assumption that $A/C$ is torsion-free. In this case $A$ has a subgroup $A'$
of finite index such that $A'/C$ is torsion-free.  
Moreover in view of \cite[Lemma 2.7]{BG2}, $M$ is finitely generated and has GK dimension 
equal to $\rk(C)$ as $F \s A'$-module.  

\end{remark}

We are now ready to prove, 
\begin{theorem_1} 
\label{MT1} 
Let $M$ be a nonzero finitely  
generated  $F \s A$-module where  
$F \s A$ has center $F$.  
Let $C < A$ be a subgroup having a subgroup $C_0$ of  
finite index such that $F \s C_0$ is commutative.  
If $M$ is finitely generated as an $F \s C$-module then,    
\begin{enumerate} 
\item[(i)] $\gk(M) = \rk(C)$,  
\item[(ii)] $M$ is $F \s C$-torsion-free,  
\item[(iii)] $M$ is artinian,  
\item[(iv)] $M$ is cyclic.      
\end{enumerate}  
\end{theorem_1}  
 
\begin{proof} (i) 
We shall denote the module 
$M$ regarded as 
$F \s C$-module as $M_C$. 
By hypothesis $M_C$ is finitely generated and 
so $M_{C_0}$ is also finitely generated where $M_{C_0}$ denotes $M_C$ viewed as $F \s C_0$-module.
By \cite[Lemma 2.7]{BG2}, \[ \gk(M) = \gk(M_C) = \gk(M_{C_0}) \].
If $\gk(M) < \rk(C)$ we may pick a subgroup $E_0 < C_0$ with 
$\rk(E_0) < \rk(C_0)$ such that $M_{C_0}$ is not 
$F \s E_0$-torsion and $\gk(M) = \rk(E_0)$ (section \ref{}). 
Moreover $F \s E_0$ is commutative 
since $F \s C_0$ is commutative by the hypothesis.
By Lemmma \ref{AG_reslt} and 
Remark \ref{fi}, 
$E_0$ has 
a (virtual) complement  
$E_1$ in $A$ such that $F \s E_1$ 
is commutative.
Since $\rk(E_1) + \rk(C_0)$ exceeds $\rk(A)$, Hence $E_1 \cap C_0 > \langle 1 \rangle$.
Moreover as $E_1E_0$ has finite index in $A$, hence $E_1C_0$ has finite index in $A$.
But $\overline{E_1 \cap C_0}$ is central in  $F \s E_1C_0$ and hence $F \s A$ has center larger than $F$.
This is contrary to the hyopthesis in the theorem.
 
(ii) Suppose that the $F \s C$-torsion submodule $T$ of $M$ is nonzero.  
We recall that $T$ is an $F \s A$-submodule of $M$.    
Applying part (i) of the theorem just established to $T$  
we obtain $\gk(T) = \rk(C)$. 
We shall denote the $F \s C$-module structure on $T$ by $T_C$.
We note that $T_C$ is finitely generated. Moreover 
\[ \gk(T_C) = \rk(T) = \rk(C) \] 
by \cite[Lemma 2.7]{BG2}. 
It then follows from \cite[Proposition 2.6]{BG2}, 
that there exists $t \in T$ so that $\ann_{F \s C}(t) = 0$.
But by the definition of $T$ this is $\ann_{F \s C}(t) \not = 0$.
Hence $T = 0$ and $M$ is $F \s C$-torsion-free. 

(iii) We first note that each nonzero subfactor of $M$ has the same GK dimension as $M$. Let $N$ and $L$ be submodules of $M$ such that $Q: = N/L$ is nonzero. 
As $Q$ finitely generated over 
$F \s C$ it is $F \s C$-torsion free by part (ii) of the theorem shown above.  
It follows from Definition \ref{def_dim} and Proposition \ref{dim_is_exact} that 
\[ \gk(Q) = \rk(C) = \gk(M). \]
Now let
\begin{equation} 
\label{str_des_seq_1} 
 M = M_0 > M_1 > M_2 > \cdots,   
\end{equation} 
be a strictly descending  
sequence of submodules  
of $M$. By the above $\gk(M_i/M_{i + 1}) = \gk(M)$ for all $i \ge 0$.
By \cite[Lemma 5.6]{MP} and \cite[5.9]{MP} the sequence  
(\ref{str_des_seq_1}) must become constant after a finite number of  
steps. Hence $M$ is artinian.     
 
(iv) Since $F \s A$ has center  
$F$, it is simple by  
\cite[Proposition 1.3]{MP}.  
It follows  
from  
\cite[Corollary 1.5]{Ba2} that $M$ is cyclic.   
\end{proof} 
\begin{corollary} 
\label{dim_1_mod} 
Each finitely generated module with Gelfand--Kirillov  
dimension less than or equal to one over an $F \s A$  
with center exactly $F$ is  
artinian and cyclic.   
\end{corollary}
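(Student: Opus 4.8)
The plan is to deduce Corollary \ref{dim_1_mod} directly from Theorem A. Let $M$ be a nonzero finitely generated $F \s A$-module with $\gk(M) \le 1$, where $F \s A$ has center exactly $F$. The goal is to produce a subgroup $C < A$ satisfying the hypotheses of Theorem A, namely $C$ contains a finite-index subgroup $C_0$ with $F \s C_0$ commutative, and $M$ is finitely generated over $F \s C$; once that is done, conclusions (iii) and (iv) of Theorem A give exactly the desired artinian and cyclic conclusions.

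First I would dispose of the case $\gk(M) = 0$: here $M$ is finite-dimensional over $F$, so it is a finitely generated module over $F \s C$ for $C = \langle 1 \rangle$ (equivalently, $M$ is artinian of finite length and cyclic because $F \s A$ is simple by \cite[Proposition 1.3]{MP} together with \cite[Corollary 1.5]{Ba2}). So assume $\gk(M) = 1$. Then I would choose an infinite cyclic subgroup $Z \le A$ witnessing the dimension, i.e.\ with $\rk(Z) = 1 = \gk(M)$ and $M$ not $F \s Z$-torsion; such a $Z$ exists by Definition \ref{def_dim}. As recalled in Section 2 (see \cite[Chapter 1]{P1}), $F \s Z$ is automatically commutative for any infinite cyclic $Z \le A$, so we may take $C = C_0 = Z$ and the commutativity hypothesis of Theorem A holds for free.

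The one remaining point is to check that $M$ is finitely generated as an $F \s Z$-module. Here I would invoke Lemma \ref{AG_reslt} (or rather its proof technique) or, more directly, \cite[Lemma 2.3]{BG2}: localizing at $S = F \s Z \setminus \{0\}$, the module of fractions $MS^{-1}$ is a finite-dimensional vector space over the quotient field $F_S$ of $F \s Z$, because $\gk(M) = \rk(Z)$. Combined with the fact that $M$ is finitely generated over $F \s A$, a standard argument (passing through the Noetherian ring $F \s A$ and comparing with the localization, as in the setup preceding Lemma \ref{AG_reslt}) shows $M$ is finitely generated over $F \s Z$. Then Theorem A applies with $C = Z$ and parts (iii), (iv) give that $M$ is artinian and cyclic.

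The main obstacle I anticipate is precisely this last verification: that being of GK dimension $1$ with $M$ not $F \s Z$-torsion actually upgrades to finite generation over $F \s Z$, rather than merely finite-dimensionality of the localization $MS^{-1}$. One must rule out, for instance, an infinitely generated $F \s Z$-submodule sitting inside $M$ whose localization collapses; this is where Noetherianity of $F \s A$ and the torsion-freeness coming from Theorem A(ii) (applied after the fact, or a direct filtration argument) do the real work. Everything else is a matter of assembling the pieces already proved.
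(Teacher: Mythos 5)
Your overall strategy (reduce to Theorem A by exhibiting an infinite cyclic $C \le A$ with $M$ finitely generated over the automatically commutative $F \s C$) is the same as the paper's, and your treatment of the $\gk(M)=0$ case is fine. But the step you yourself flag as the main obstacle is a genuine gap, and your proposed way around it does not work. Knowing that $M$ is not $F \s Z$-torsion and that the localization $MS^{-1}$ is finite dimensional over $F_S$ does \emph{not} imply that $M$ is finitely generated over $F \s Z$: localization discards the $F \s Z$-torsion part of $M$, which can be infinite dimensional over $F$ (hence not finitely generated over $F \s Z \cong F[t^{\pm 1}]$) while contributing nothing to $MS^{-1}$. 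Noetherianity of $F \s A$ only gives finite generation of submodules over $F \s A$, not over the much smaller subring $F \s Z$, so the ``standard argument'' you invoke is not available. Moreover, your choice of $Z$ is an arbitrary witness to $\dim(M)=1$, and for a fixed such $Z$ the conclusion can genuinely fail for modules built from pieces supported on different cyclic subgroups; ruling this out requires the hypothesis that $F \s A$ has center exactly $F$ in an essential, nonformal way.

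The paper closes exactly this gap by quoting a substantive external result, \cite[Lemma 2.9]{Wa2} (Wadsley's homogeneity/rigidity theorem for the Brookes--Groves invariant of a simple quantum torus), which asserts outright that a finitely generated $F \s A$-module of GK dimension one is finitely generated over $F \s C$ for \emph{some} infinite cyclic $C$. That is the one ingredient your argument is missing; once you have it, the rest of your reduction to Theorem A(iii),(iv) is correct and matches the paper.
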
 
\begin{proof} 
We recall that an $F \s A$-module with GK dimension zero is finite  
dimensional (e.g. \cite{KL}) over the base field $F$ and so is artinian. 
By \cite[Lemma 2.9]{Wa2}, an $F \s A$-module with GK dimension one is finitely generated over some  
$F \s C$ with $C$ infinite cyclic. Since $F \s C$ must be commutative, the result follows from Theorem A. 
\end{proof} 
 
\begin{proposition} 
\label{hol_mod} 
A finitely generated $F \s A$-module $M$ 
satisfying \[ \gk(N) + \dim(F \s A)  = \rk(A) \]  
has finite length.    
\end{proposition}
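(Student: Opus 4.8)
The plan is to prove that $M$ is artinian; since $F \s A$ is Noetherian and $M$ is finitely generated, $M$ is then also Noetherian, and hence of finite length. (If $M = 0$ there is nothing to prove, so assume $M \neq 0$.) I would follow the pattern of the proof of part~(iii) of Theorem~A. Put $h = \gk(M)$, so that the hypothesis reads $h = \rk(A) - \dim(F \s A)$, where $\dim(F \s A)$ is the common value of the Krull and global dimensions of $F \s A$; by \cite[Theorem~A]{Br} this equals the largest rank of a subgroup $B \le A$ for which $F \s B$ is commutative.

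The key step will be to show that \emph{every} nonzero subfactor $Q$ of $M$ satisfies $\gk(Q) = h$. One inequality is automatic: a nonzero subquotient of $M$ has Gelfand--Kirillov dimension at most $\gk(M) = h$, since Gelfand--Kirillov dimension does not increase on passing to a submodule or a quotient. For the reverse inequality I would invoke the fact that $\rk(A) - \dim(F \s A)$ is the least Gelfand--Kirillov dimension of a nonzero finitely generated $F \s A$-module --- the analogue for quantum Laurent polynomial algebras of Bernstein's inequality, which is available from the analysis of Krull and global dimension in \cite{MP} and \cite{Br}. This yields $\gk(Q) \ge \rk(A) - \dim(F \s A) = h$, and hence $\gk(Q) = h$.

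Granting this, the proof finishes exactly as for Theorem~A(iii). Let $M = M_0 > M_1 > M_2 > \cdots$ be a strictly descending chain of submodules; each section $M_i/M_{i+1}$ is a nonzero finitely generated $F \s A$-module, so $\gk(M_i/M_{i+1}) = h = \gk(M)$ for every $i$ by the previous step, whence \cite[Lemma~5.6]{MP} and \cite[5.9]{MP} force the chain to stabilise after finitely many steps. Thus $M$ is artinian, which completes the argument.

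The step I expect to be the real obstacle is the Bernstein-type lower bound $\gk(Q) \ge \rk(A) - \dim(F \s A)$ for nonzero finitely generated $F \s A$-modules --- equivalently, the identification of $\rk(A) - \dim(F \s A)$ with the minimal Gelfand--Kirillov dimension; this is precisely what makes the hypothesis a genuine holonomicity condition, the remaining ingredients being either elementary or already established in the proof of Theorem~A. As a sanity check, when $\dim(F \s A) = \rk(A) - 1$ the statement reduces to the Gelfand--Kirillov dimension one case of Corollary~\ref{dim_1_mod}.
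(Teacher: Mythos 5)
Your argument is essentially the paper's own: the paper likewise reduces to showing every section of a strictly descending chain has Gelfand--Kirillov dimension equal to $\gk(M)$, obtains the lower bound from Brookes's identification of $\rk(A)-\dim(F\s A)$ as the minimal GK dimension of a nonzero finitely generated module (the Bernstein-type inequality you correctly flag as the one genuine external input), and then invokes \cite[Lemma 5.6]{MP} and \cite[5.9]{MP} to stop the chain. Your approach matches the paper's, with the small extra (harmless) remark that Noetherianness plus the artinian property yields finite length.
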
 

\begin{proof} 
By \cite[Theorem A]{B}, $\dim(F \s A)$ is equal to the maximal rank of a subgroup 
$B$ of $A$ such that $F \s B$ is commutative. Hence in view of 
\cite{Theorem 3}{B}, the minimum possible GK dimension of a nonzero finitely generated 
$F \s A$-module is $\rk(A) - \dim(F \s A) = \gk(M)$.  
Thus in any (strictly) descending 
sequence  $M = M_0 > M_1 > \cdots >$ of submodules of $M$, $\gk(M_i/M_{i + 1}) = \gk(M)$ for each $i \ge 0$. It then p follows from \cite[Lemma 5.6]{MP} and  
\cite[5.9]{MP} that this sequence must halt. Hence $M$ has finite length.
\end{proof} 
 
We now investigate the situation in which a submdoule of a finitely generated $F \s A$-module is finitely generated 
over a commutative subalgebra $F \s C$ for $C < A$.
\begin{definition}
A nonzero $F \s A$-module $N$ is called \emph{critical} when $N/L$ has strictly smaller GK dimension than $N$ 
for each nonzero proper submodule $L$ of $N$. 
\end{definition}
 
\subsection{Applications to nilpotent groups} 
We shall now consider some applications to finitely generated  
torsion-free nilpotent groups of class $2$. 
Throughout this section $H$ stands for such a group and  
$k$ denotes a field. We denote the group algebra of $H$ over $k$ by  
$kH$ and by $\zeta H$ the center of $H$.    
   
In \cite{G}, the following definition is given. 
\begin{definition} 
A finitely generated $kH$-module  
$M$ is said to be \emph{reduced} if there is a prime ideal $P$ of  
$k\zeta H$ such that $M$ is annihilated by  
$P$ and is $k\zeta H/P$-torsion-free. 
\end{definition}    
 
It is remarked in \cite{G} that every finitely  
generated $kH$-module is filtered by a  
finite series in which each section is reduced. 
We are now ready to prove  
 
\begin{proposition} 
\label{app_nilp_gr} 
Let $H$ be a finitely generated  
torsion-free nilpotent group of class  
$2$. Let $M$ be a finitely generated  
$kH$-module reduced with annihilator $P$  
in $k\zeta H$.  
If $M$ is finitely generated as a 
$kL$-module for some abelian  
subgroup $L < H$ then, 
\begin{enumerate} 
\item[(i)] $M$ is torsion-free as  
$kL/P.kL$-module,
\item[(ii)] for any descending  
sequence  
\[  M_1 \ge M_2 \ge  \cdots \ge  \] of submodules of $M$, 
 there is an integer $s$ such that  
$M_k/M_{k + 1}$ is $k\zeta H/P$-torsion  
for all $k \ge s$, 
\end{enumerate}          
\end{proposition}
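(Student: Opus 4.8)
The plan is to carry the problem into the quantum Laurent polynomial setting of Theorem~A by localizing $kH$, which is possible precisely because $H$ has class $2$ and hence $H' \le \zeta H$.

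First I would pass to $\bar R := kH/P.kH$. Since $\zeta H$ is central the ideal $P.kH$ is two-sided and $M$ is a $\bar R$-module, and since $H' \le \zeta H$ the algebra $\bar R$ is a crossed product of the finitely generated abelian group $\bar H := H/\zeta H$ over the commutative domain $k\zeta H/P$. As $M$ is reduced it is $k\zeta H/P$-torsion-free, so the nonzero elements of $k\zeta H/P$ form a central --- hence Ore --- set $S$, the map $M \to \hat M := MS^{-1}$ is injective, and $\hat M$ is a nonzero module over $\hat R := \bar R S^{-1} = D \s \bar H$, where $D := \mathrm{Frac}(k\zeta H/P)$ is a field. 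Because $L$ is abelian, $L\zeta H$ is abelian, so the image $\bar L$ of $L$ in $\bar H$ yields a commutative subalgebra $D \s \bar L$ of $\hat R$, over which $\hat M$ is finitely generated (as $M$ is finitely generated over $kL$). Finally, replacing $\bar H$ by a free abelian subgroup of finite index --- over whose twisted group algebra $\hat R$ is a finitely generated module, so that by \cite[Lemma~2.7]{BG2} the module $\hat M$ stays finitely generated with the same GK dimension and stays finitely generated over a commutative subalgebra that is now a Laurent polynomial ring over $D$, hence a domain --- I reduce to the case where $\hat R$ is a genuine quantum Laurent polynomial algebra over $D$.

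At this point Theorem~A is within reach. Provided the center of $\hat R$ equals $D$, Theorem~A applies to $\hat M$ directly and gives that $\hat M$ is artinian, is $D\s\bar L$-torsion-free, and satisfies $\gk(\hat M) = \rk(\bar L)$. (If the center of $\hat R$ is strictly larger, one localizes once more, at the nonzero elements of that center, to land in the center-$F$ case; the price is that the conclusion of (ii) then refers to the center of $kH/P.kH$ in place of $k\zeta H/P$.)

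It remains to descend, using that every localization above is injective on the pertinent module. For (i): an element of $kL$ with nonzero image in $kL/P.kL$ still has nonzero image in $D\s\bar L$, so it cannot annihilate a nonzero $m \in M$, because $m$ has nonzero image in $\hat M$ and $\hat M$ is $D\s\bar L$-torsion-free. For (ii): any descending chain $M_1 \ge M_2 \ge \cdots$ of submodules of $M$ localizes to a descending chain of $\hat R$-submodules of the artinian module $\hat M$, hence stabilizes; so $(M_k/M_{k+1})S^{-1} = 0$, i.e. $M_k/M_{k+1}$ is $S$-torsion, hence $k\zeta H/P$-torsion, for all large $k$. The step I expect to be the real obstacle --- and the only place where reducedness and the class-$2$ hypothesis $H' \le \zeta H$ are needed decisively --- is showing that the center of $kH/P.kH$ is no larger than $k\zeta H/P$; this is exactly what permits the clean invocation of Theorem~A and the clean form of (ii).
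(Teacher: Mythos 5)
Your proposal follows essentially the same route as the paper: pass to $kH/P.kH$, invert the nonzero elements of $k\zeta H/P$ (equivalently, tensor with $K=\mathrm{Frac}(k\zeta H/P)$) to obtain a twisted group algebra $K \s A$ with $A=H/\zeta H$, note that $\widehat M$ is finitely generated over the commutative subalgebra $K \s C$ with $C$ the image of $L$, apply Theorem~A, and descend using that $M\to\widehat M$ is injective because $M$ is reduced. Three remarks. First, your detour through a free abelian subgroup of finite index in $H/\zeta H$ is unnecessary: for a finitely generated torsion-free nilpotent group the quotient $H/\zeta H$ is itself torsion-free, hence free abelian, so $K\s A$ is already a quantum Laurent polynomial algebra over $K$. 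Second, the paper's printed proof establishes only part (i); your stabilization argument for (ii) --- the chain localizes to a chain in the artinian module $\widehat M$, hence stabilizes, so $M_k/M_{k+1}$ is eventually $S$-torsion --- is the natural completion and is correct \emph{granted that Theorem~A applies}. Third, the obstacle you single out is genuine and is not resolved in the paper, which simply asserts that the center of $K\s A$ is exactly $K$. This can fail: for the integral Heisenberg group with $\zeta H=\langle z\rangle$ and $P=(z-1)$, the algebra $kH/P.kH$ is a commutative Laurent polynomial ring, and taking $M=kH/(z-1,x-1)kH\cong k[y^{\pm1}]$ (which is reduced and cyclic over the abelian subgroup $\langle y,z\rangle$) one gets strictly descending chains whose quotients are nonzero $k$-vector spaces, hence never $k\zeta H/P$-torsion; so part (ii) as stated fails there. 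Your hesitation therefore marks a gap in the proposition itself (some hypothesis forcing the center of $K\s A$ to be $K$ is needed for (ii)), not a defect of your argument; part (i) survives in any case since its conclusion in the problematic examples can be, and in your write-up is, handled by the injectivity of the localization maps.
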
 

\begin{proof} 
We note that there is no harm in assuming that $L > \zeta H$.
As in \cite{G}, let $K$ denote the field of fractions  
of $k\zeta H/P$ and set  
\[ \widehat{M} = M \otimes_{k\zeta H/P} K. \]  
Then $\widehat{M}$ is a finitely generated  
module for $kH/P.kH \otimes_{k\zeta H/P} K$ and the latter is a  
twisted group algebra  
$K \s A$, where $A = H/\zeta H$. 
Moreover the center of  
$K \s A$ is exactly $K$. 
By the hypothesis in the theorem, 
$M$ is finitely generated  
over $kL$ and so over  
$kL/P.kL$. Thus  
$\widehat{M}$ is finitely generated  
over $kL/P.kL \otimes_{k\zeta H/P} K$ which is the subalgebra $K \s C$ for  
$C = L/\zeta H$.   
Moreover since $L$ is abelian,  
therefore, $K \s C$ is commutative. 
By  Theorem A, $\widehat{M}$ is  $K \s C$-torsion-free. In other words $\widehat{M}$ is torsion-free as 
$kL/P.kL \otimes_{k\zeta H/P} K$-module. 
As $M$ is reduced it is by definition $k\zeta H/P$-torsion free.  Hence the natural 
$kH/P.kH$-linear map $\mu: M \rightarrow \widehat{M}$ via $y \mapsto y \otimes 1$ is a monomorphism.   
Therefore $M$ must be $kL/P.kL$-torsion free. This shows part(i).
\end{proof} 

As a consequence we obtain  

\begin{theorem_2} 
Let $H$ be a finitely generated  
torsion-free nilpotent group of class $2$ . 
For a finitely generated  
$kH$-module $N$ which is  
reduced with annihilator $P$ and  
is finitely generated over $kL$ for some  
abelian subgroup $L \le H$, $N$ is torsion-free as  $kL/P.kL$-module. Moreover if  
$\zeta H$ is cyclic then $N$ has finite length.          
\end{theorem_2}

\end{document}